\theoremstyle{plain}
\newtheorem{theorem}{Theorem}
\numberwithin{equation}{section}
\newcommand{\ra}{\rightarrow}
\newcommand{\mdot}{\,\begin{picture}(-1,-1)(-1,-1)\circle*{2}\end{picture}\ }
\begin{document}

\title {Painlev\'e IV: roots and zeros}

\date{}

\author[P.L. Robinson]{P.L. Robinson}

\address{Department of Mathematics \\ University of Florida \\ Gainesville FL 32611  USA }

\email[]{paulr@ufl.edu}

\subjclass{} \keywords{}

\begin{abstract}

We consider the (real) fourth Painlev\'e equation in which both parameters vanish, analyzing the square-roots of its solutions and paying special attention to their zeros. 

\end{abstract}

\maketitle

\medbreak

\section*{Introduction} 

In [2] we offered elementary proofs for fundamental properties of the unique triple-zero solution to the first Painlev\'e equation. In [3] we treated in a similar fashion all solutions to the second Painlev\'e equation whose graphs pass through the origin. Here we consider aspects of what is arguably the next case: the fourth Painlev\'e equation, which was discovered by Gambier. The general form of this equation is 
$$\frac{{\rm d}^2 w}{{\rm d} z^2} = \frac{1}{2 w} \Big(\frac{{\rm d} w}{{\rm d} z}\Big)^2 + \frac{3}{2} w^3 + 4 z w^2 + 2 (z^2 - \alpha) w + \frac{\beta}{w}$$
where $\alpha$ and $\beta$ are parameters. As is suggested by its form, this equation is properly {\it complex}: in fact, each of its solutions is meromorphic in the plane, with simple poles of residue $\pm 1$; see [1] and references therein. Note the presence of the dependent variable in denominators: this separates the fourth Painlev\'e equation from the first and second; of course, it engenders some complications. 

\medbreak 

In line with the setting of our previous papers, we shall consider the fourth Painlev\'e equation in purely real terms; moreover, we shall only consider the case in which $\alpha = 0$ and $\beta = 0$. Accordingly, our version of the fourth Painlev\'e equation (P IV) is 
$$\overset{\mdot \mdot}{s} = \frac{1}{2 s}\overset{\mdot}{s}^2 + \frac{3}{2} s^3 + 4 t s^2 + 2 t^2 s$$
to be solved for real $s$ as a function of real $t$; our preference here for $\overset{\mdot}{s}$ over $s'$ as notation for the derivative is largely on account of the otherwise awkward $s'^2$ or $(s')^2$ for its square. If the solution $s$ is strictly positive then its (positive) square-root $\sqrt s$ satisfies a second-order equation that is simpler than P IV in having no first-derivative term and no awkward denominators; in the opposite direction, the squares of nowhere-zero solutions to this simpler equation satisfy P IV. Those circumstances in which solutions to P IV or the simpler equation have (isolated) zeros call for separate handling. All of these matters are discussed under the sections {\it Square-roots} (on the differential equations themselves) and {\it Isolated zeros} (on the special handling of zeros); in our final section on {\it Remarks} we address some related issues without proof. 

\medbreak 

\section*{Square-roots} 

\medbreak 

We begin with some elementary observations regarding our version of the fourth Painlev\'e equation, which we restate for the record as 
\begin{equation} \label{P} 
\overset{\mdot \mdot}{s} = \frac{1}{2 s}\overset{\mdot}{s}^2 + \frac{3}{2} s^3 + 4 t s^2 + 2 t^2 s, \tag{{$\bf P$}}
\end{equation}
where the ratio on the right side is to be understood as a limit when necessary. 

\medbreak 

Observe that \ref{P} may be reformulated in a number of ways. First we may clear the awkward denominator, thus: 
\begin{equation*}  
2 s \overset{\mdot \mdot}{s} - \overset{\mdot}{s}^2  = 3 s^4 + 8 t s^3 + 4 t^2 s^2.  
\end{equation*} 
Further,  we may factor the right side, thus: 
\begin{equation*}  
2 s \overset{\mdot \mdot}{s} - \overset{\mdot}{s}^2  = s^2 (3s + 2 t)(s + 2 t).  
\end{equation*} 

\medbreak 

Observe also that reversing the sign of the dependent variable leads to the equation 
\begin{equation} \label{Pbar} 
\overset{\mdot \mdot}{s} = \frac{1}{2 s}\overset{\mdot}{s}^2 + \frac{3}{2} s^3 - 4 t s^2 + 2 t^2 s; \tag{{$\bf \overline{P}$}}
\end{equation} 
of course, sign-reversal in \ref{Pbar} leads to \ref{P} likewise.  Incidentally, passage between \ref{P} and \ref{Pbar} may also be effected by reversal of the independent variable. Of course, \ref{Pbar} admits reformulations akin to those for \ref{P} itself. 

\medbreak 

Let us agree to write $\mathbb {P}$ for the set comprising all solutions to the Painlev\'e equation \ref{P}; when extra clarity is called for, we may write $\mathbb{P} (I)$ for the set comprising all solutions to \ref{P} on the open interval (more generally, open set) $I \subseteq \mathbb{R}$. Similarly, we write $\overline{\mathbb{P}}$ for the set of all solutions to \ref{Pbar} (on some interval, which we may indicate for clarity). We observed above that multiplication by $-1$ yields a bijection 
$$\mathbb{P} \rightarrow \overline{\mathbb{P}} : s \mapsto - s;$$
also, that reversal of the independent variable yields a bijection from $\mathbb{P} (I)$ to $\overline{\mathbb{P}} (- I)$. 

\medbreak 

Now, let $s \in \mathbb{P}$ be a {\it strictly positive} solution to \ref{P} and write $\sigma : = \sqrt{s} = s^{1/2}$ for its positive square-root. Certainly, $\sigma$ is twice-differentiable. Further, from $s = \sigma^2$ it follows that 
$$\overset{\mdot}{s} = 2 \sigma \overset{\mdot}{\sigma}$$
so that 
$$\overset{\mdot}{\sigma}^2 = \frac{\overset{\mdot}{s}^2}{4 \sigma^2} =  \frac{\overset{\mdot}{s}^2}{4 s}$$
and 
$$\overset{\mdot \mdot}{s} = 2 \overset{\mdot}{\sigma}^2 + 2 \sigma \overset{\mdot \mdot}{\sigma} = \frac{\overset{\mdot}{s}^2}{2 s} + 2 \sigma \overset{\mdot \mdot}{\sigma}$$
so that 
$$\overset{\mdot \mdot}{s} - \frac{\overset{\mdot}{s}^2}{2 s} = 2 \sigma \overset{\mdot \mdot}{\sigma}.$$
All of this requires only that the twice-differentiable function $s$ be strictly positive. Recalling that $s$ is a solution to \ref{P} we deduce that 
$$2 \sigma \overset{\mdot \mdot}{\sigma} = \overset{\mdot \mdot}{s} - \frac{\overset{\mdot}{s}^2}{2 s} =  \frac{3}{2} s^3 + 4 t s^2 + 2 t^2 s = \frac{1}{2} s (3 s + 2 t)(s + 2 t)$$
or 
$$\overset{\mdot \mdot}{\sigma} = \frac{1}{4} \sigma (3 \sigma^2 + 2 t)(\sigma^2 + 2 t).$$

\medbreak 

This finding prompts us to formalize the auxiliary differential equation 
\begin{equation} \label{Phalf} 
4 \overset{\mdot \mdot}{\sigma} = \sigma (3 \sigma^2 + 2 t)(\sigma^2 + 2 t) \tag{{$\bf P^{1/2}$}}
\end{equation}
alongside its companion 
\begin{equation} \label{Pbarhalf} 
4 \overset{\mdot \mdot}{\sigma} = \sigma (3 \sigma^2 - 2 t)(\sigma^2 - 2 t). \tag{{$\bf \overline{P}^{1/2}$}}
\end{equation}
It also prompts us to introduce $\mathbb{P}^{1/2}$ and $\overline{\mathbb{P}}^{1/2}$ for the corresponding spaces of solutions. Notice that the map $\sigma \mapsto - \sigma$ preserves the spaces $\mathbb{P}^{1/2}$ and $\overline{\mathbb{P}}^{1/2}$ while the map $t \mapsto - t$ interchanges them. 

\medbreak 

The following result was established in the motivating lead-up to equation \ref{Phalf}.

\medbreak 

\begin{theorem} \label{sqrt}
If $s \in \mathbb{P}$ is strictly positive then $\sqrt{s} \in \mathbb{P}^{1/2}$. 
\end{theorem} 

\qed

\medbreak

As a companion result, if  $s \in \overline{\mathbb{P}}$ is strictly positive then a parallel argument places $\sqrt{s}$ in $\overline{\mathbb{P}}^{1/2}$; consequently, if $s \in \mathbb{P}$ is strictly {\it negative} then $\sqrt{-s} \in \overline{\mathbb{P}}^{1/2}$. 

\medbreak 

In the opposite direction, let $\sigma \in \mathbb{P}^{1/2}$ and write $s = \sigma^2$. Direct calculation as for the lead-up to \ref{Phalf} yields 
$$2 s \overset{\mdot \mdot}{s} - \overset{\mdot}{s}^2  = 4 \sigma^3 \overset{\mdot \mdot}{\sigma} = \sigma^4  (3 \sigma^2 + 2 t)(\sigma^2 + 2 t) = s^2 (3 s + 2 t)(s + 2 t).$$
If $\sigma$ is never zero then we may divide by $2 s$ throughout to see that $s \in \mathbb{P}$. We have established the following result. 

\medbreak 

\begin{theorem} \label{sq}
If $\sigma \in \mathbb{P}^{1/2}$ is nowhere zero then $\sigma^2 \in \mathbb{P}.$
\end{theorem}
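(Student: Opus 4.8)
The plan is to run the chain-rule computation of the lead-up to equation \ref{Phalf} in reverse, exploiting the fact that the troublesome first-derivative term in \ref{P} is exactly what the square-root substitution manufactures. First I would set $s := \sigma^2$ and record the two derivatives $\overset{\mdot}{s} = 2 \sigma \overset{\mdot}{\sigma}$ and $\overset{\mdot \mdot}{s} = 2 \overset{\mdot}{\sigma}^2 + 2 \sigma \overset{\mdot \mdot}{\sigma}$. Both are legitimate because $\sigma$, as a solution of the second-order equation \ref{Phalf}, is automatically twice-differentiable, and hence so is $s$.

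The crucial step is to form the combination $2 s \overset{\mdot \mdot}{s} - \overset{\mdot}{s}^2$, which is engineered precisely so that the first-derivative contributions cancel. Substituting the two derivatives gives $4 \sigma^2 \overset{\mdot}{\sigma}^2 + 4 \sigma^3 \overset{\mdot \mdot}{\sigma} - 4 \sigma^2 \overset{\mdot}{\sigma}^2$, so the $\overset{\mdot}{\sigma}^2$ terms disappear and only $4 \sigma^3 \overset{\mdot \mdot}{\sigma}$ survives. At this point I would invoke the hypothesis $\sigma \in \mathbb{P}^{1/2}$: replacing $4 \overset{\mdot \mdot}{\sigma}$ by $\sigma (3 \sigma^2 + 2 t)(\sigma^2 + 2 t)$ from \ref{Phalf}, and then rewriting $\sigma^2$ as $s$ throughout, yields $2 s \overset{\mdot \mdot}{s} - \overset{\mdot}{s}^2 = s^2 (3 s + 2 t)(s + 2 t)$, which is exactly the factored, denominator-free reformulation of \ref{P} recorded at the start of this section.

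It then remains to pass from the cleared form back to \ref{P} proper, and this is the only place where the nowhere-zero hypothesis does real work. Since $\sigma$ is never zero, $s = \sigma^2$ is strictly positive, so division of the identity above by $2 s$ is legitimate everywhere and returns
$$\overset{\mdot \mdot}{s} = \frac{\overset{\mdot}{s}^2}{2 s} + \tfrac{1}{2} s (3 s + 2 t)(s + 2 t),$$
whose right side expands to that of \ref{P}. I expect no genuine obstacle in the algebra; the single point demanding care is this final division, since at a zero of $\sigma$ the coefficient $1/(2 s)$ in \ref{P} would blow up and the equivalence between the cleared and original forms could fail. The assumption that $\sigma$ is nowhere zero is exactly what rules this out, which is why the case of isolated zeros is set aside for separate treatment.
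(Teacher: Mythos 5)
Your proposal is correct and follows essentially the same route as the paper: compute the combination $2 s \overset{\mdot \mdot}{s} - \overset{\mdot}{s}^2 = 4 \sigma^3 \overset{\mdot \mdot}{\sigma}$, substitute from \ref{Phalf} to obtain the cleared form $2 s \overset{\mdot \mdot}{s} - \overset{\mdot}{s}^2 = s^2 (3s + 2t)(s + 2t)$, and then divide by $2s$, with the nowhere-zero hypothesis justifying exactly that division. The paper's argument is the same calculation, stated more compactly.
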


\qed

\medbreak   

Similarly, if $\sigma \in \overline{\mathbb{P}}^{1/2}$ is nowhere zero then $\sigma^2 \in \overline{\mathbb{P}}$ and $- \sigma^2 \in \mathbb{P}$ is strictly negative. 

\medbreak 

Throughout the present section, we have deliberately avoided situations in which $\sigma \in \overline{\mathbb{P}}^{1/2}$ or $s \in \mathbb{P}$ has a zero. We shall address such situations carefully in the next section; naturally, we may ignore the identically zero function. 

\medbreak 

\section*{Isolated zeros} 

\medbreak 

As announced, we here consider situations in which $s \in \mathbb{P}$ or $\sigma \in \overline{\mathbb{P}}^{1/2}$ has a zero. Specifically, we shall assume that such a function has an isolated zero at the point $a$ in the open interval $I$: more specifically, we shall assume that the function vanishes at $a$ but at no other point of $I$. We wish to explore the extendibility of Theorem \ref{sqrt} and Theorem \ref{sq} to this context. 

\medbreak 

Observe at once from \ref{P} (say in a reformulation) that if $s \in \mathbb{P}$ satisfies $s(a) = 0$ then automatically $\overset{\mdot}{s}(a) = 0$. In like but more straightforward manner, \ref{Phalf} tells us that if $\sigma \in \mathbb{P}^{1/2}$ satisfies $\sigma(a) = 0$ then automatically $\overset{\mdot \mdot}{\sigma}(a) = 0$. We shall use these observations throughout the subsequent discussion, perhaps without comment. 

\medbreak 

Before proceeding further, it is convenient to draw attention to an important difference between \ref{P} and \ref{Phalf}. On the one hand, \ref{Phalf} has the form $\overset{\mdot \mdot}{\sigma} = \Phi(t, \sigma)$ in which $\Phi(t, \sigma)$ is a polynomial; consequently, the initial value problem for \ref{Phalf} has a standard local existence-uniqueness theorem. On the other hand, \ref{P} has the form $\overset{\mdot \mdot}{s} = F(t, s, \overset{\mdot}{s})$ in which $F(t, s, \overset{\mdot}{s})$ is rational but has $s$ in the denominator; the standard local existence-uniqueness theorem breaks down for initial data involving a zero of $s$. In fact, we have seen that if $s$ satisfies \ref{P} then the vanishing of $s(a)$ forces that of $\overset{\mdot}{s}(a)$; were standard local uniqueness to apply, a solution to \ref{P} with a zero would vanish throughout its interval of definition. 

\medbreak 

\begin{theorem} \label{NO} 
Let $\sigma \in \mathbb{P}^{1/2}$ and let $\sigma \geqslant 0$ on $I \ni a$. If $\sigma(a) = 0$ then $\sigma = 0$. 
\end{theorem}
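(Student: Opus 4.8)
The plan is to use the nonnegativity hypothesis to extract a second piece of Cauchy data at $a$, and then to play the resulting initial value problem off against the identically-zero solution, invoking the uniqueness theorem that — as emphasized just before the statement — is available for \ref{Phalf} but not for \ref{P}.

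First I would record the derivative of $\sigma$ at $a$. Since $I$ is open, $a$ is an interior point; and since $\sigma \geqslant 0$ on $I$ with $\sigma(a) = 0$, the differentiable function $\sigma$ attains an interior minimum at $a$, whence $\dot{\sigma}(a) = 0$. This is precisely where the sign constraint earns its keep: a general element of $\mathbb{P}^{1/2}$ may cross zero transversally, with $\dot{\sigma}(a) \neq 0$, and it is exactly the hypothesis $\sigma \geqslant 0$ that excludes this possibility. (The vanishing of $\ddot{\sigma}(a)$ noted above is not needed for the argument.) Next I would compare $\sigma$ with the identically-zero function $\zeta \equiv 0$, which manifestly lies in $\mathbb{P}^{1/2}$ and satisfies $\zeta(a) = 0 = \dot{\zeta}(a)$. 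Because \ref{Phalf} has the form $\ddot{\sigma} = \Phi(t,\sigma)$ with $\Phi$ a polynomial — hence smooth, hence locally Lipschitz — its initial value problem obeys the standard local existence-uniqueness theorem. As $\sigma$ and $\zeta$ carry the same Cauchy data $(0,0)$ at $a$, they must coincide on a neighborhood of $a$; thus $\sigma$ vanishes identically near $a$.

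Finally I would promote this local conclusion to all of $I$ by a connectedness argument. Set $Z := \{x \in I : \sigma(x) = 0 = \dot{\sigma}(x)\}$. This set is closed, by continuity of $\sigma$ and $\dot{\sigma}$, and nonempty, since $a \in Z$; moreover, applying the uniqueness argument of the preceding paragraph at any $x_0 \in Z$ shows $\sigma \equiv 0$ on a neighborhood of $x_0$, so that $Z$ is also open. As $I$ is connected, $Z = I$ and therefore $\sigma = 0$, as claimed. I anticipate no serious obstacle here: the whole argument hinges on the interior-minimum step that forces $\dot{\sigma}(a) = 0$, after which the favorable analytic nature of \ref{Phalf} supplies the rest. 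The one point to keep in view is that uniqueness genuinely delivers the zero solution only because the sign constraint furnished the extra condition $\dot{\sigma}(a) = 0$ that \ref{Phalf} alone does not impose at a zero.
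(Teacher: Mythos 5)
Your proof is correct and takes essentially the same route as the paper's: the sign hypothesis forces $\dot{\sigma}(a) = 0$ via the interior-minimum argument, and comparison with the identically zero solution under the local existence-uniqueness theorem for \ref{Phalf} (valid because the right side is polynomial in $(t,\sigma)$) yields the conclusion. The only difference is that you spell out the local-to-global step through the open-closed connectedness argument on $Z = \{x \in I : \sigma(x) = 0 = \dot{\sigma}(x)\}$, a detail the paper compresses into the single assertion that local uniqueness ``ensures that $\sigma = 0$''.
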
 

\begin{proof} 
The hypotheses ensure that not only $\sigma(a) = 0$ but also $\overset{\mdot}{\sigma}(a) = 0$. The identically zero function satisfies \ref{Phalf} on $I$ with the same initial data. The local uniqueness theorem for \ref{Phalf} now ensures that $\sigma = 0$. 
\end{proof}

\medbreak 

It follows at once that Theorem \ref{sqrt} has no direct extension allowing an isolated zero.

\medbreak 

\begin{theorem} \label{no} 
If $s \in \mathbb{P}$ is strictly positive except for an isolated zero at $a \in I$ then $\sqrt s \notin \mathbb{P}^{1/2}.$
\end{theorem}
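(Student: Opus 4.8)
The plan is to argue by contradiction, with Theorem \ref{NO} doing the essential work. Write $\sigma := \sqrt{s}$ for the positive square-root; this is defined and continuous on all of $I$, by construction $\sigma \geqslant 0$ on $I$, and $\sigma(a) = \sqrt{s(a)} = 0$. I would then suppose, toward a contradiction, that $\sigma \in \mathbb{P}^{1/2}$.

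Granting this supposition, the function $\sigma$ satisfies exactly the hypotheses of Theorem \ref{NO}: it lies in $\mathbb{P}^{1/2}$, it is nonnegative on the interval $I \ni a$, and it vanishes at $a$. That theorem then forces $\sigma = 0$ identically on $I$. But $s$ is strictly positive on the nonempty set $I \setminus \{a\}$, so $\sigma = \sqrt{s} > 0$ there, contradicting $\sigma \equiv 0$. Hence the supposition fails and $\sqrt{s} \notin \mathbb{P}^{1/2}$, as claimed.

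There is little genuine obstacle here once Theorem \ref{NO} is in hand; the result is in effect its contrapositive packaging, and the remark preceding the statement (that Theorem \ref{sqrt} admits no extension across an isolated zero) already anticipates it. The only points requiring care are bookkeeping ones: confirming that $\sqrt{s}$ really is nonnegative and vanishes precisely at $a$, and observing that membership in $\mathbb{P}^{1/2}$ presupposes that $\sigma$ is a genuine (twice-differentiable) solution on \emph{all} of $I$. The latter observation is what makes the contradiction argument clean, since it lets us sidestep entirely any separate question of whether $\sqrt{s}$ is even twice-differentiable at $a$: if it is not, then it is a fortiori not in $\mathbb{P}^{1/2}$, and if it is, Theorem \ref{NO} delivers the contradiction.
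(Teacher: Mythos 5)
Your proposal is correct and is essentially the paper's own proof: the paper likewise notes that $\sqrt{s}$ vanishes at $a$ but is strictly positive on $I \setminus \{a\}$, and invokes Theorem \ref{NO} to exclude it from $\mathbb{P}^{1/2}$. Your version merely spells out the contradiction (and the bookkeeping about twice-differentiability) that the paper leaves implicit.
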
 

\begin{proof} 
The (positive) square-root $\sqrt{s}$ is zero at $a \in I$ but strictly positive on $I \setminus\{a\}$; Theorem \ref{NO} therefore excludes $\sqrt{s}$ from $\mathbb{P}^{1/2}$.
\end{proof} 

\medbreak 

Notwithstanding this negative result, we have the following. 

\medbreak 

\begin{theorem} \label{yes}
If $s \in \mathbb{P}$ is strictly positive except for an isolated zero at $a \in I$ then there exists $\sigma \in \mathbb{P}^{1/2}$ such that $s = \sigma^2$. 
\end{theorem}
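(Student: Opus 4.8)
The plan is to build $\sigma$ by gluing together the two sign-choices of the square-root across the zero. Theorem \ref{no} forbids the naive candidate $\sqrt s$, since by Theorem \ref{NO} a nonnegative solution of \ref{Phalf} cannot have an isolated zero; the remedy is to let $\sigma$ \emph{change sign} at $a$, so that it crosses zero transversally rather than merely touching it. Concretely, write $I = I_- \cup \{a\} \cup I_+$ where $I_-$ and $I_+$ are the two components of $I \setminus \{a\}$. On each of $I_-$ and $I_+$ the function $s$ is strictly positive, so Theorem \ref{sqrt} places $\sqrt s$ in $\mathbb{P}^{1/2}$ there; since $\sigma \mapsto -\sigma$ preserves $\mathbb{P}^{1/2}$, the function equal to $-\sqrt s$ on $I_-$ and to $+\sqrt s$ on $I_+$ solves \ref{Phalf} away from $a$. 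The whole task is then to show that this sign-flipped square-root extends across $a$ as a genuine twice-differentiable solution of \ref{Phalf}.

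The analytic heart is to control $\sqrt s$ as $t \to a$. From \ref{Phalf} the second derivative of $\sqrt s$ equals $\tfrac14 \sqrt s\,(3 s + 2t)(s + 2t)$, which is bounded near $a$ because $\sqrt s \to 0$ while the polynomial factors stay bounded; hence the first derivative of $\sqrt s$ has a finite one-sided limit $c$ at $a$. This $c$ cannot vanish: were $\sqrt s$ to reach $a$ with zero slope it would share the data $\sqrt s(a)=0$ and derivative $0$ with the identically zero solution of \ref{Phalf}, and the uniqueness theorem would force $\sqrt s \equiv 0$, contrary to $\sqrt s > 0$ on $I_+$. Because $s$ is twice differentiable with $s(a) = \overset{\mdot}{s}(a) = 0$, Taylor's theorem gives $s(t) = \tfrac12 \overset{\mdot\mdot}{s}(a)\,(t-a)^2 + o((t-a)^2)$, so $a$ is an exact double zero, $\sqrt{s(t)}/|t-a| \to c = \sqrt{\overset{\mdot\mdot}{s}(a)/2} > 0$, and the one-sided slopes of $+\sqrt s$ (from the right) and $-\sqrt s$ (from the left) agree at the common value $c$. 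Re-examining \ref{Phalf} then shows the sign-flipped square-root is in fact $C^2$ across $a$, with $\overset{\mdot\mdot}{\sigma}(a) = 0$.

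It remains to glue. Invoking the local existence–uniqueness theorem for \ref{Phalf} — available because its right-hand side is a polynomial — let $\sigma$ be the solution with $\sigma(a) = 0$ and $\overset{\mdot}{\sigma}(a) = c$. To the right of $a$ this $\sigma$ shares its Cauchy data with the one-sided solution $\sqrt s$, so uniqueness identifies $\sigma$ with $\sqrt s$ there; to the left it agrees likewise with $-\sqrt s$. Since $\pm\sqrt s$ are defined and bounded on all of $I_\pm$, the solution $\sigma$ continues over the whole of $I$, and by construction $\sigma^2 = s$ throughout, as required. I expect the main obstacle to be the middle step: justifying that $\sqrt s$ extends to $a$ as a one-sided $C^2$ solution with nonzero slope (equivalently, that $a$ is an exact double zero of $s$). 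It is precisely the failure of uniqueness for \ref{P} at a zero of $s$ that makes this delicate; once the transversal crossing is secured, the gluing is a routine appeal to uniqueness for the regular equation \ref{Phalf}.
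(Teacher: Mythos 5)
Your construction is exactly the paper's: glue $-\sqrt s$ to the left of $a$ onto $+\sqrt s$ to the right, and reduce the whole problem to showing that this sign-flipped square-root is twice differentiable at $a$ with $\overset{\mdot \mdot}{\sigma}(a)=0$, so that \ref{Phalf} holds there trivially. Where you genuinely differ is in how you certify the matching of the one-sided slopes. The paper works from \ref{P}: it writes $\overset{\mdot}{s}^2/(2s) = \overset{\mdot \mdot}{s} - \tfrac12 s(3s+2t)(s+2t)$, lets $t \ra a$ to get $\lim \overset{\mdot}{\sigma}(t)^2 = \tfrac12 \overset{\mdot \mdot}{s}(a)$, checks signs, and applies the mean value theorem twice. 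You work from \ref{Phalf} instead: boundedness of the second derivative of $\sqrt s$ near $a$ gives finite one-sided limits of the first derivative, and the Peano--Taylor expansion $s(t) = \tfrac12 \overset{\mdot \mdot}{s}(a)(t-a)^2 + o((t-a)^2)$ identifies both one-sided slopes as $\sqrt{\overset{\mdot \mdot}{s}(a)/2}$. This buys you something real: your identification of the limit uses only twice-differentiability of $s$ at $a$, whereas the paper's limit computation tacitly needs $\overset{\mdot \mdot}{s}(t) \ra \overset{\mdot \mdot}{s}(a)$ (continuity of the second derivative at $a$), which is not automatic. Your further observation that the crossing slope $c$ cannot vanish --- by uniqueness for \ref{Phalf}, whence $\overset{\mdot \mdot}{s}(a) = 2c^2 > 0$ --- is not needed for the theorem, but it is correct and gives, by elementary means, a fact the paper only reaches after Theorem \ref{peculiar} by invoking the (admittedly difficult) meromorphicity of solutions. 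One economy you could make: your final paragraph, re-deriving $\sigma$ as the solution of the initial value problem at $a$ and matching it to $\pm\sqrt s$ by uniqueness, is redundant --- once the glued function is known to be $C^2$ at $a$ with $\sigma(a) = \overset{\mdot \mdot}{\sigma}(a) = 0$, it satisfies \ref{Phalf} on all of $I$ and the proof is already complete.
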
 

\begin{proof} 
To the left of $a$ there are only two continuous square-roots of $s$, namely $\pm \sqrt s$; likewise to the right of $a$. Since the taking of like signs on each side of $a$ leads to failure, we mix signs: thus, well-define $\sigma$ on $I$ by 
 \begin{equation*}
    \sigma(t)=
    \begin{cases}
      - \sqrt{s(t)} & \text{if}\ I \ni t \leqslant a, \\
      + \sqrt{s(t)} & \text{if} \ I \ni t \geqslant a.
    \end{cases}
  \end{equation*}
Theorem \ref{sqrt} easily places $\pm \sigma$ in $\mathbb{P}^{1/2}$ on $I\setminus\{a\}$; we must show that $\sigma$ is twice-differentiable at its zero $a$ with $\overset{\mdot \mdot}{\sigma}(a) = 0$. Let $I \ni t \neq a$: as $s = \sigma^2$, 
$$\overset{\mdot}{\sigma}(t) = \frac{\overset{\mdot}{s}(t)}{2 \sigma(t)}$$
while as $s \in \mathbb{P}$ and $s(a) = 0$, 
$$\frac{\overset{\mdot}{s}(t)^2}{2 s(t)} = \overset{\mdot \mdot}{s}(t) - \frac{1}{2} s(t)\Big(3 s(t) + 2 t \Big) \Big(s(t) + 2 t \Big)$$
and 
$$\lim_{t \ra a} \frac{\overset{\mdot}{s}(t)^2}{4 \sigma(t)^2} = \lim_{t \ra a} \frac{\overset{\mdot}{s}(t)^2}{4 s(t)} = \frac{1}{2}\overset{\mdot \mdot}{s}(a).$$
Checking signs, the taking of square-roots yields 
$$\lim_{t \ra a} \overset{\mdot}{\sigma}(t) = \sqrt{\frac{1}{2}\overset{\mdot \mdot}{s}(a)}\: ;$$
as $\sigma$ is continuous on $I$, it follows that $\sigma$ is (continuously) differentiable throughout $I$ by an application of the mean value theorem. As $\sigma$ satisfies \ref{Phalf} on $I\setminus\{a\}$, it follows that 
$$\lim_{t \ra a} \overset{\mdot \mdot}{\sigma}(t) = \lim_{t \ra a} \frac{1}{4} \sigma(t)\Big(3 \sigma(t)^2 + 2 t \Big) \Big( \sigma(t)^2 + 2 t \Big) = 0$$
whence a further application of the mean value theorem to the continuous function $\overset{\mdot}{\sigma}$ shows that $\sigma$ is twice-differentiable at $a$ with $\overset{\mdot \mdot}{\sigma}(a) = 0$ as required.
\end{proof} 

\medbreak 

Of course, a similar argument shows that if $s \in \mathbb{P}$ is strictly negative except for an isolated zero at $a \in I$ then there exists $\sigma \in \overline{\mathbb{P}}^{1/2}$ such that $s = - \sigma^2$; again, $\sigma$ takes opposite signs on opposite sides of $a.$ 

\medbreak 

Theorem \ref{no} and Theorem \ref{yes} are complements to Theorem \ref{sqrt} for cases in which $s \in \mathbb{P}$ has an isolated zero. There are analogous complements to Theorem \ref{sq} for cases in which $\sigma \in \mathbb{P}^{1/2}$ has an isolated zero. 

\medbreak 

The appropriate counterpart of Theorem \ref{no} is immediate. 

\medbreak 

\begin{theorem} \label{nono}
If $\sigma \in \mathbb{P}^{1/2}$ is strictly positive except for an isolated zero at $a \in I$ then $\sigma^2 \notin \mathbb{P}.$
\end{theorem}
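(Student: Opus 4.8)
The plan is to read the statement off Theorem \ref{NO} directly, exactly as Theorem \ref{no} was read off it. Under the stated hypotheses, $\sigma \in \mathbb{P}^{1/2}$ is nonnegative on $I$ --- being strictly positive on $I \setminus \{a\}$ and zero at $a$ --- and satisfies $\sigma(a) = 0$. These are precisely the hypotheses of Theorem \ref{NO}, which I would invoke to conclude that $\sigma = 0$ throughout $I$.

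Since $\sigma \equiv 0$ flatly contradicts the strict positivity of $\sigma$ away from $a$, the upshot is that the hypotheses cannot be jointly realized: no member of $\mathbb{P}^{1/2}$ is strictly positive apart from an isolated zero. The asserted conclusion $\sigma^2 \notin \mathbb{P}$ then holds, if only vacuously. This is the one genuine asymmetry with Theorem \ref{no}: because \ref{Phalf} has the form $\overset{\mdot \mdot}{\sigma} = \Phi(t, \sigma)$ and so enjoys local uniqueness, a nonnegative solution that touches zero collapses to the zero solution; the denominator in \ref{P} is exactly what permits $s \in \mathbb{P}$ to carry an isolated zero without collapsing, which is why Theorem \ref{no} has a realizable hypothesis while its counterpart here does not.

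If a contradiction argument that parallels Theorem \ref{no} more literally is preferred, I would instead suppose $s := \sigma^2 \in \mathbb{P}$, note that $s$ is then strictly positive but for the isolated zero at $a$, and observe that the positivity of $\sigma$ off $a$ identifies $\sigma$ with the positive square-root $\sqrt{s}$ on all of $I$; Theorem \ref{no} excludes $\sqrt{s}$ from $\mathbb{P}^{1/2}$, contradicting $\sigma \in \mathbb{P}^{1/2}$, and so $\sigma^2 \notin \mathbb{P}$. Either way there is no real obstacle --- the substantive work lies entirely in Theorem \ref{NO} and the local existence--uniqueness for \ref{Phalf} that underlies it --- and the only point needing a moment's care is the conceptual one of recognizing that the hypothesis is self-defeating.
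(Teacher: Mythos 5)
Your proposal is correct, and your fallback argument is exactly the paper's proof: assume $\sigma^2 \in \mathbb{P}$, note that $\sigma$, being nonnegative, is the positive square-root of $\sigma^2$, and invoke Theorem \ref{no} to exclude that square-root from $\mathbb{P}^{1/2}$, contradicting $\sigma \in \mathbb{P}^{1/2}$. Your primary route, however, is genuinely different and, if anything, sharper than what the paper records: Theorem \ref{NO} applies directly to $\sigma$ (nonnegative on $I$, vanishing at $a$) and forces $\sigma \equiv 0$, contradicting strict positivity off $a$; hence the hypothesis of the present theorem is unsatisfiable and the conclusion holds vacuously. The paper nowhere acknowledges this vacuity --- Theorem \ref{nono} is stated as a formal counterpart to Theorem \ref{no}, whose hypothesis is, by contrast, genuinely realizable (Theorem \ref{yes} shows that squares of sign-changing elements of $\mathbb{P}^{1/2}$ are elements of $\mathbb{P}$ that are strictly positive except for an isolated zero). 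The asymmetry you identify --- local uniqueness for \ref{Phalf} forbids a nonnegative solution from touching zero, while the denominator in \ref{P} is precisely what allows solutions of \ref{P} to carry an isolated zero --- is accurate and is the real content of the situation. In substance the two routes are close: the paper's Theorem \ref{no} is itself deduced from Theorem \ref{NO}, so both proofs rest on the same foundation; yours simply short-circuits the intermediate step and exposes that the theorem, as stated, concerns an empty class of functions.
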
 

\begin{proof} 
If $\sigma^2$ were to lie in $\mathbb{P}$ then its positive square-root would lie outside $\mathbb{P}^{1/2}$ according to Theorem \ref{no}; but this positive square-root is $\sigma$ itself. 
\end{proof} 

\medbreak 

The appropriate counterpart of Theorem \ref{yes} requires just a little more work. 

\medbreak 

\begin{theorem} \label{yesyes}
If $\sigma \in \mathbb{P}^{1/2}$ takes opposite signs on opposite sides of $a \in I$ then $\sigma^2 \in \mathbb{P}.$
\end{theorem}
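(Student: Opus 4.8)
The plan is to verify directly that $s := \sigma^2$ belongs to $\mathbb{P}(I)$. Since $\sigma$ is continuous and changes sign at $a$, we have $\sigma(a) = 0$, while the sign hypothesis guarantees that $\sigma$ is nowhere zero on $I \setminus \{a\}$; as $\sigma \in \mathbb{P}^{1/2}$, the vanishing of $\sigma(a)$ forces $\overset{\mdot \mdot}{\sigma}(a) = 0$ as already noted. First I would record that $s$ is automatically twice-differentiable on the whole of $I$, since $\sigma$ is and $s = \sigma^2$; indeed $\overset{\mdot}{s} = 2 \sigma \overset{\mdot}{\sigma}$ and $\overset{\mdot \mdot}{s} = 2 \overset{\mdot}{\sigma}^2 + 2 \sigma \overset{\mdot \mdot}{\sigma}$ throughout $I$. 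In particular $s(a) = 0$ and $\overset{\mdot}{s}(a) = 0$, while $\overset{\mdot \mdot}{s}(a) = 2 \overset{\mdot}{\sigma}(a)^2$.

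Next I would invoke the polynomial identity established in the lead-up to Theorem \ref{sq}, namely $2 s \overset{\mdot \mdot}{s} - \overset{\mdot}{s}^2 = s^2 (3 s + 2 t)(s + 2 t)$, which holds on all of $I$ because it follows from the differential equation \ref{Phalf} for $\sigma$ without any division. On $I \setminus \{a\}$, where $s > 0$, dividing this identity by $2 s$ recovers \ref{P}; this is precisely Theorem \ref{sq} applied on each of the two subintervals on which $\sigma$ keeps a constant sign.

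It then remains to check that \ref{P} holds at $a$ itself, where the ratio $\overset{\mdot}{s}^2 / (2 s)$ is to be read as a limit. Here the decisive simplification is that, on $I \setminus \{a\}$, one has $\overset{\mdot}{s}^2 / (2 s) = 4 \sigma^2 \overset{\mdot}{\sigma}^2 / (2 \sigma^2) = 2 \overset{\mdot}{\sigma}^2$, so this ratio extends continuously across $a$ with limiting value $2 \overset{\mdot}{\sigma}(a)^2$. Since the remaining terms $\tfrac{3}{2} s^3 + 4 t s^2 + 2 t^2 s$ all vanish at $a$, the right side of \ref{P} tends to $2 \overset{\mdot}{\sigma}(a)^2$, which is exactly $\overset{\mdot \mdot}{s}(a)$ as computed above; hence \ref{P} holds at $a$ and $s \in \mathbb{P}$.

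The main obstacle is the behaviour at the zero $a$, where the denominator of \ref{P} vanishes and the standard form of the equation breaks down; everything hinges on confirming that the problematic quotient $\overset{\mdot}{s}^2 / (2 s)$ has the correct limit there. Fortunately this quotient equals $2 \overset{\mdot}{\sigma}^2$ identically off $a$, so no delicate indeterminate-form estimate is needed --- the limit is immediate from the continuity of $\overset{\mdot}{\sigma}$. I expect the only point requiring care in the write-up is fixing the precise reading of ``opposite signs on opposite sides'', which is what ensures $\sigma$ is nonzero throughout $I \setminus \{a\}$ and thereby legitimizes the division by $2s$ on each side.
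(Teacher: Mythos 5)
Your proposal is correct and follows essentially the same route as the paper: apply Theorem \ref{sq} on $I \setminus \{a\}$, compute $\overset{\mdot \mdot}{s}(a) = 2\overset{\mdot}{\sigma}(a)^2$ from $\sigma(a)=0$, and observe that the awkward ratio $\overset{\mdot}{s}^2/(2s)$ equals $2\overset{\mdot}{\sigma}^2$ off $a$, so its limit at $a$ is $2\overset{\mdot}{\sigma}(a)^2$ by continuity of $\overset{\mdot}{\sigma}$, matching $\overset{\mdot \mdot}{s}(a)$ since the polynomial terms vanish there. Your detour through the division-free identity $2s\overset{\mdot \mdot}{s} - \overset{\mdot}{s}^2 = s^2(3s+2t)(s+2t)$ is harmless but redundant, as it only repackages the citation of Theorem \ref{sq} that both you and the paper make.
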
 

\begin{proof} 
The twice-differentiable square $s := \sigma^2$ satisfies \ref{P} on $I\setminus\{a\}$ by Theorem \ref{sq}. 
Notice that if $t \in I$ then $\overset{\mdot}{s}(t) = 2 \sigma(t) \overset{\mdot}{\sigma}(t)$ and 
$$\overset{\mdot \mdot}{s}(t) = 2 \sigma(t) \overset{\mdot \mdot}{\sigma}(t) + 2 \overset{\mdot}{\sigma}(t)^2.$$
Consequently, as $\sigma(a) = 0$ it follows that 
$$\overset{\mdot \mdot}{s}(a) -  \frac{1}{2} s(a) (3s(a) + 2 a)(s(a) + 2 a) = 2 \overset{\mdot}{\sigma}(a)^2$$
and 
$$\lim_{t \ra a}\frac{\overset{\mdot}{s}(t)^2}{2 s(t)} = \lim_{t \ra a} \frac{(2 \sigma(t) \overset{\mdot}{\sigma}(t))^2}{2 \sigma(t)^2} = \lim_{t \ra a} 2 \overset{\mdot}{\sigma}(t)^2 = 2 \overset{\mdot}{\sigma}(a)^2.$$
This shows that $s$ satisfies \ref{P} at $a$ also and concludes the demonstration. 
\end{proof} 

\medbreak 

We close by remarking that the case of $s \in \mathbb{P}$ with an isolated zero at which the second derivative also vanishes is as tidy as can be: in fact, the case does not arise! Our first step towards this result is perhaps a little peculiar in hindsight. 

\medbreak 

\begin{theorem} \label{peculiar}
Let $s \in \mathbb{P}$ have an isolated zero at $a$. If $\overset{\mdot \mdot}{s}(a) = 0$ then each derivative of $s$ vanishes at $a$. 
\end{theorem}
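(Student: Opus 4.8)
The plan is to prove by strong induction on $n$ that $s^{(n)}(a)=0$ for every $n\geqslant 0$. The base cases are already in hand: $s(a)=0$ by hypothesis, $\overset{\mdot}{s}(a)=0$ is forced as noted at the start of this section, and $\overset{\mdot\mdot}{s}(a)=0$ is the standing assumption; thus $s^{(k)}(a)=0$ for $0\leqslant k\leqslant 2$. For the statement even to make sense I first need all higher derivatives of $s$ to exist at $a$, and I will return to this point, since it is where the genuine difficulty lies. Granting smoothness at $a$, the inductive engine is the \emph{cleared} form of \ref{P}, namely the polynomial identity
\[
2 s \overset{\mdot\mdot}{s} - \overset{\mdot}{s}^2 = s^2 (3 s + 2 t)(s + 2 t),
\]
which holds throughout $I$ (both sides vanish at $a$) and hence may be differentiated and evaluated at $a$ freely.

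For the inductive step I would suppose $n\geqslant 3$ and $s^{(k)}(a)=0$ for all $k<n$, then differentiate the displayed identity exactly $2n-2$ times and evaluate at $a$ via the Leibniz rule. On the left, each term is a product $s^{(i)}s^{(j)}$ of two derivatives whose orders sum to $2n$ (the piece $2s\overset{\mdot\mdot}{s}$ contributes $s^{(i)}s^{(j)}$ with $i+j=2n$, and $\overset{\mdot}{s}^2$ likewise); by the inductive hypothesis $s^{(k)}(a)=0$ for $k\leqslant n-1$, so such a product survives at $a$ only when $i=j=n$. On the right, every term carries at least two derivative-factors of $s$ while absorbing at most $2n-2$ differentiations, so a degree count forbids placing two of those factors at order $\geqslant n$, and the whole right side dies at $a$. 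What remains is a single relation $c_n\,\bigl(s^{(n)}(a)\bigr)^2=0$, where the surviving binomial coefficients combine to
\[
c_n = 2\binom{2n-2}{n} - \binom{2n-2}{n-1} = \binom{2n-2}{n-1}\,\frac{n-2}{n},
\]
which is strictly positive for $n\geqslant 3$; hence $s^{(n)}(a)=0$, closing the induction. This is the promised peculiarity: the hypotheses yield not a linear equation for $s^{(n)}(a)$ but the vanishing of its \emph{square}. It is also reassuring that $c_2=0$, which is exactly why $\overset{\mdot\mdot}{s}(a)=0$ cannot be read off from \ref{P} and must be imposed by hand.

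The main obstacle is the smoothness flagged above. Unlike \ref{Phalf}, equation \ref{P} is singular at a zero of $s$, so there is no local existence–uniqueness theorem guaranteeing that $s$ is infinitely differentiable at $a$, yet the induction presupposes that the derivatives $s^{(n)}(a)$ exist. Away from $a$ this is routine, since there $s$ is nowhere zero and bootstrapping \ref{P} gives $s\in C^\infty(I\setminus\{a\})$; all the work is in crossing the zero. I see two ways to supply what is needed. Where $s$ keeps a fixed sign near $a$ one may pass, as in Theorem \ref{yes}, to a square-root solving the polynomial equation \ref{Phalf}, whose solutions are automatically smooth, and then square back; alternatively one may appeal to the meromorphy of solutions of Painlev\'e IV recalled in the Introduction, which renders $s$ real-analytic near the (non-polar) point $a$. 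Either route secures smoothness at $a$, after which the remaining content is the Leibniz-and-degree bookkeeping above, whose only delicate points are verifying that no unbalanced left-hand term and no right-hand term survives and that the leftover coefficient $c_n$ is nonzero.
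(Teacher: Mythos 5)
Your combinatorial core is sound: differentiating the cleared identity $2 s \overset{\mdot \mdot}{s} - \overset{\mdot}{s}^2 = s^2 (3s+2t)(s+2t)$ exactly $2n-2$ times, evaluating at $a$ under the inductive hypothesis, the left side collapses to $c_n \bigl( s^{(n)}(a) \bigr)^2$ with $c_n = 2\binom{2n-2}{n} - \binom{2n-2}{n-1} = \binom{2n-2}{n-1}\tfrac{n-2}{n} > 0$ for $n \geqslant 3$, and the degree count killing the right side is correct. The genuine gap is exactly the one you flagged: the argument presupposes that $s$ has derivatives of all orders at $a$, and neither of your patches supplies this in the case that matters. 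Your first route (via Theorem \ref{yes}) requires $s$ to keep a fixed sign near $a$; but under the standing hypothesis $s(a) = \overset{\mdot}{s}(a) = \overset{\mdot \mdot}{s}(a) = 0$, a sign change at $a$ is precisely what cannot be ruled out a priori --- indeed, $\overset{\mdot \mdot}{s}(a) \neq 0$ would force a strict local extremum, so the hypothesis of this very theorem singles out the one situation in which a sign change is possible. Your second route invokes the meromorphy of Painlev\'e IV solutions, which is the ``difficult'' external fact that the paper pointedly declines to use inside any proof (it appears only in the remark \emph{after} this theorem, explicitly ``taken for granted''); worse, even granting meromorphy of complex solutions, identifying the real $C^2$ solution $s$ with the restriction of a single meromorphic function \emph{across} its zero is delicate, because $a$ is exactly where local uniqueness for \ref{P} fails, so $s$ could a priori be two analytic solutions glued together merely $C^2$-smoothly at $a$.

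The paper's proof dissolves both difficulties with one differentiation. Differentiating the cleared identity once, the terms $\pm 2 \overset{\mdot}{s} \overset{\mdot \mdot}{s}$ cancel, giving $2 s \overset{\mdot \mdot \mdot}{s} = 2 s \overset{\mdot}{s} Q + s^2 \overset{\mdot}{Q}$ with $Q = (3s+2t)(s+2t)$; dividing by $s$ (legitimate away from $a$, then at $a$ by continuity) yields the identity
$$2 \overset{\mdot \mdot \mdot}{s} = 2 \overset{\mdot}{s} Q + s \overset{\mdot}{Q}$$
on all of $I$, whose right side is a polynomial in $t$, $s$, $\overset{\mdot}{s}$ only. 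This does double duty. First, it bootstraps smoothness of $s$ at $a$ for free --- no sign analysis, no meromorphy. Second, it linearizes the induction: every term in every derivative of the right side carries a factor $s^{(j)}$ of strictly lower order than the derivative isolated on the left, so the vanishing of all derivatives at $a$ propagates term by term, with no squared unknown and no binomial coefficient to verify. Your squared relation $c_n \bigl( s^{(n)}(a) \bigr)^2 = 0$ and the attendant check $c_n \neq 0$ are artifacts of working with the undifferentiated quadratic identity; they are correct, but they leave you needing exactly the regularity input that the paper's preliminary manipulation produces from the equation itself.
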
 

\begin{proof} 
The second reformulation of \ref{P} informs us that 
$$2 s \overset{\mdot \mdot}{s} - \overset{\mdot}{s}^2  = s^2 (3s + 2 t)(s + 2 t) = s^2 Q$$
say, where $Q$ is quadratic in $s$ and $t$. Away from the isolated zero, we may differentiate: the resulting terms $\pm 2 \overset{\mdot}{s} \overset{\mdot \mdot}{s}$ on the left cancel, to yield 
$$2 s \overset{\mdot \mdot \mdot}{s} = 2 s \overset{\mdot}{s} Q + s^2 \overset{\mdot}{Q}$$
whence 
$$2 \overset{\mdot \mdot \mdot}{s} = 2 \overset{\mdot}{s} Q + s \overset{\mdot}{Q}$$
away from $a$ and hence at $a$ also. All that remains is to differentiate inductively. 
\end{proof} 

\medbreak 

We can now see that this case is indeed vacuous: taking the ({\it difficult} !) meromorphicity of $s$ for granted, the identity theorem implies that $s$ is zero throughout the open interval in which $a$ is an isolated zero; this is absurd!  

\medbreak 

In particular, it follows that $s \in \mathbb{P}$ cannot change sign at an isolated zero.

\medbreak 

\section*{Remarks} 

\medbreak 

We round out our account with some miscellaneous comments on related topics of interest. 

\medbreak 

Recall that the fourth Painlev\'e equation is properly a complex differential equation. The process of passing to a square-root is naturally more elaborate in the complex setting: as we mentioned, solutions to the fourth Painlev\'e equation are meromorphic, with {\it simple} poles; square-roots of such functions cannot be meromorphic! Nonetheless, there is sufficient reason for further study of the relevant auxiliary equation 
$$4 \frac{{\rm d}^2 \omega}{{\rm d} z^2} = \omega (3 \omega^2 + 2 z) (\omega^2 + 2 z).$$

\bigbreak 

We have seen in our study of the fourth Painlev\'e equation \ref{P} that the auxiliary differential equation \ref{Phalf} is of definite theoretical interest. In fact, this auxiliary equation is also of considerable practical help, aside from its ability to handle initial data involving a zero. We began exploring solutions of the fourth Painlev\'e equation with the aid of WZGrapher, a valuable freeware program developed by Walter Zorn. Quite early in our explorations, we noticed apparent graphical instabilities: for example, solutions of \ref{P} with certain initial data would at first appear to be oscillatory; upon zooming out, such a solution might seem to suffer a catastrophe, oscillations disappearing and being replaced by a blow-up or spike; upon zooming out further, oscillations might reappear; and so on. Not surprisingly, such catastrophic behaviour manifests itself at a zero of the solution and so involves the awkward denominator in \ref{P}. These apparent graphical instabilities seem to be removed by passage to the corresponding solutions of \ref{Phalf}, as the reader may care to see using WZGrapher. 

\medbreak 

The factorized form of the fourth Painlev\'e equation \ref{P}:  
$$\overset{\mdot \mdot}{s} - \frac{\overset{\mdot}{s}^2}{2 s}  = \frac{1}{2} s (3s + 2 t)(s + 2 t)$$
indicates that the lines `$s = 0$', `$s = -2 t/3$' and `$s = - 2 t$' have geometric significance for its solutions. Similarly, `$\sigma = 0$' and the parabolas `$\sigma^2 = - 2 t/3$' and `$\sigma^2 = - 2 t$' have geometric significance for solutions to the auxiliary equation \ref{Phalf}: 
$$4 \overset{\mdot \mdot}{\sigma} = \sigma (3 \sigma^2 + 2 t)(\sigma^2 + 2 t). $$
This geometric significance can be seen in a concavity diagram. The curves `$\sigma = 0$', `$\sigma^2 = - 2 t/3$' and `$\sigma^2 = - 2 t$' divide the $(t, \sigma)$-plane into regions. The sign of $\overset{\mdot \mdot}{\sigma}$ is negative/positive in the regions directly above/below the half-parabola `$\sigma = + \sqrt{- 2 t/3}$' so that solutions to \ref{Phalf} have the opportunity to oscillate about this half-parabola; similarly, solutions to \ref{Phalf} may oscillate about `$\sigma = - \sqrt{- 2 t/3}$'. 

\medbreak 

In fact, experimentation with WZGrapher reveals that solutions to \ref{Phalf} that do not suffer blow-up in both time directions tend to display steadily decaying oscillations about the upper or lower half of the parabola `$\sigma^2 = - 2 t/3$' as $t \ra - \infty$; and that solutions often tend to linger alongside `$\sigma = 0$' or `$\sigma^2 = - 2 t$' as they make more or less extended approaches to tangency. Also, it not infrequently happens that a minuscule change in initial data causes a solution $\sigma$ to flip its oscillations from one half-parabola to the other, or to flip the direction of its finite-time blow-up, in such a way that the sudden transition is not detectable in $\sigma^2$. On a more aesthetic note, when oscillations of $\sigma \in \mathbb{P}^{1/2}$ occur about a half-parabola `$\sigma = \pm \sqrt{- 2 t/3}$' they are quite evenly balanced. By contrast, when oscillations of $s \in \mathbb{P}$ occur about the line `$s = -2 t/3$' they are uneven, displaying larger arches on the side of the line away from `$s = 0$'. Of course, squaring accounts for the difference.

\medbreak 

One relatively simple family of illustrative examples takes $\sigma \in \mathbb{P}^{1/2}$ with $\sigma(0) = 0$ and $\overset{\mdot}{\sigma}(0)$ strictly positive. As $\overset{\mdot}{\sigma}(0)$ increases from $0$ to a little beyond $1.169868591$, two gradual changes to the solution $\sigma$ take place simultaneously: on the one hand, $\sigma$ oscillates about `$\sigma = - \sqrt{- 2 t/3}$', the amplitude of the oscillations initially decreasing and finally increasing; on the other hand, $\sigma$  lingers initially along `$\sigma = 0$' and finally along `$\sigma = - \sqrt{- 2 t}$'; when $\overset{\mdot}{\sigma}(0)$ is around $0.65$ the oscillations have their least amplitude and there is no lingering along either curve. As $\overset{\mdot}{\sigma}(0)$ increases from $1.169868591$ to $1.169868592$ the oscillations disappear, to be replaced by a negative blow-up in finite negative time; thereafter, the lingering along `$\sigma = - \sqrt{- 2 t}$' gradually disappears and the finite-time blow-up accelerates. Throughout, $\sigma \in \mathbb{P}^{1/2}$ has a unique zero, at which it changes sign; accordingly, its square lies in $\mathbb{P}$. 

\medbreak 

We leave to the reader the pleasure of exploring this family of examples in WZGrapher (or some similar program). Among many other families to explore, we recommend the following: take $\sigma(0) = 1$ and let $\overset{\mdot}{\sigma}(0)$ run from $-0.933899363$ to $1.579186627$, noting the several transitions with reference to `$\sigma = 0$', `$\sigma^2 = - 2 t/3$' and `$\sigma^2 = - 2 t$'; take $\sigma(-6) = 2$ and let $\overset{\mdot}{\sigma}(-6)$ run from $-0.170889967$ to $-0.170889968$ (!).  

\bigbreak

\begin{center} 
{\small R}{\footnotesize EFERENCES}
\end{center} 
\medbreak 

[1] V.I. Gromak, I. Laine and S. Shimomura, {\it Painlev\'e Differential Equations in the Complex Plane}, de Gruyter (2002). 

\medbreak 

[2] P.L. Robinson, {\it The Triple-Zero Painlev\'e I Transcendent}, arXiv 1607.07088 (2016). 

\medbreak 

[3] P.L. Robinson, {\it Homogeneous Painlev\'e II Transcendents}, arXiv 1608.02139 (2016). 
\medbreak

\end{document}